\theoremstyle{definition}
\newtheorem{definition}{Definition}[section]
\newtheorem*{remark}{Remark}
\newtheorem*{notation}{Notation}
\newtheorem{theorem}{Theorem}[section]
\newtheorem{corollary}{Corollary}[theorem]
\newtheorem{claim}[theorem]{Claim}
\newtheorem{fact}[theorem]{Fact}
\newtheorem*{acknowledgements}{Acknowledgements}
\title{A note on strong Erd\H{o}s-Hajnal for graphs with bounded VC-minimal complexity}
\author{Yayi Fu }
\date{}
\begin{document}
\maketitle
\begin{abstract}
 Inspired by Adler's idea on VC minimal theories \cite{adler2008theories}, we introduce VC-minimal complexity. 
 We show that for any $N\in\mathbb{N}^{>0}$,
 there is $k_N>0$ such that for any finite bipartite graph $(X,Y;E)$
with VC-minimal complexity $< N$, there exist $X'\subseteq X$, $Y'\subseteq Y$ with $|X'|\geq k_N |X|$, $|Y'|\geq k_N |Y|$ such that $X'\times Y' \subseteq E$ or $X'\times Y'\cap E=\emptyset$. 
\end{abstract} \hspace{10pt}
\section{Introduction}
\indent 

\emph{Erd\H{o}s-Hajnal conjecture} \cite{erdos1989ramsey} says for any graph $H$ there is $\epsilon>0$ such that if a graph $G$ 
does not contain any induced subgraph isomorphic to $H$ then $G$ has a clique or an anti-clique of size $\geq |G|^\epsilon$. 
More generally, we say a family of finite graphs has the \emph{Erd\H{o}s-Hajnal property} 
if there is $\epsilon>0$ such that for any graph $G$ in the family, $G$ has a clique or an anti-clique of size $\geq|G|^\epsilon$.
A family of finite graphs has the \emph{strong Erd\H{o}s-Hajnal property} if there is 
$\epsilon>0$ such that for any graph $G=(V,E)$ in the family,
there exist $X,Y\subseteq V$ such that $X\cap Y=\emptyset$, $|X|\geq \epsilon|V|$, 
$|Y|\geq \epsilon|V|$, and $X\times Y\subseteq E$ or $X\times Y\subseteq\neg E$. 
Strong Erd\H{o}s-Hajnal property implies Erd\H{o}s-Hajnal property. 
(See \cite[Theorem~1.2.]{alon2005crossing}.) 
Malliaris and Shelah proved in \cite{malliaris2014regularity} that the family 
of stable graphs has the Erd\H{o}s-Hajnal property. 
Chernikov and Starchenko gave another proof for stable graphs in \cite{chernikov2018note} and 
in \cite{chernikov2018regularity} they proved that the family of distal graphs has the strong Erd\H{o}s-Hajnal property. 
In general, we are interested in whether the family of finite VC dimension (i.e. NIP \cite{simon2015guide}) graphs,
which contains both stable graphs and distal graphs, has the Erd\H{o}s-Hajnal property.
Motivation for studying this problem was given in \cite{fox2019erdHos}, 
which also gave a lower bound $e^{(\log n)^{1-o(1)}}$ for largest clique or anti-clique in a graph with bounded VC dimension. 
In this paper, we consider graphs of bounded VC-minimal complexity, a special case of NIP graphs. 
Roughly speaking, we say a bipartite graph $(X,Y;E)$ has VC-minimal complexity $<N$ if for all $a\in X$, 
the set $\{y\in Y: (a,y)\in E\}$ is a finite union of Swiss Cheeses such that the sum of the number of holes and the number of Swiss Cheeses is $<N$.
We will show that strong Erd\H{o}s-Hajnal property holds for the family of finite bipartite graphs $(X,Y;E)$ of bounded VC-minimal complexity. 
One example is definable relations $E(x,y)$ with $|x|=1,|y|=1$ in $ACVF$ (algebraically closed valued field). 
Since $ACVF$ allows Swiss Cheese decomposition \cite{holly1995canonical}, 
given any $\mathcal{M}\models ACVF$ and any definable relation $E \subseteq M\times M$, 
the family $\{(X,Y;E_{\upharpoonright X\times Y}):X,Y$ finite subsets of $M\}$ has bounded VC-minimal complexity, 
and thus strong Erd\H{o}s-Hajnal property holds. 
This partially generalizes \cite[Example~4.11.(2)]{chernikov2018regularity}. \\
\indent 
We will prove the following: 
\begin{theorem}
     For $N>0$, let $k_N=\dfrac{1}{2^{N+4}}$. If a finite bipartite graph $(X,Y;E)$ has VC-minimal complexity $<N$ then there exist $X'\subseteq X$, $Y'\subseteq Y$ with $|X'|\geq k_N|X|$, $|Y'|\geq k_N|Y|$ such that $X'\times Y'\subseteq E$ or $X'\times Y'\cap E=\emptyset$.
\end{theorem}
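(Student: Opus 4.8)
The plan is to reduce the statement to a purely combinatorial fact about a \emph{laminar} family of ``balls'' on $Y$ and then to induct on the complexity bound $N$, losing a factor of $\tfrac12$ on each side at every step. First I would record the structural input coming from VC-minimality: there is a family $\mathcal{B}$ of subsets of $Y$ (the balls, together with the holes, which are themselves balls) such that any two members are either nested or disjoint, and such that each row $E_a=\{y\in Y:(a,y)\in E\}$ is a union of finitely many Swiss Cheeses whose total number of cheeses and holes is $<N$. Restricting to the finitely many distinct traces on the finite set $Y$, I may treat $\mathcal{B}$ as a finite laminar family, i.e.\ a forest under inclusion. The goal then becomes: find $X'\subseteq X$ and $Y'\subseteq Y$, each a $k_N$-fraction of the original, with all rows indexed by $X'$ agreeing on $Y'$ (either ``all of $Y'$'' or ``none of $Y'$'').

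The inductive engine is a single \emph{cutting} step. I would pick one ball $b\in\mathcal{B}$ and restrict the column side to one of $Y\cap b$ or $Y\setminus b$. The point of laminarity is that for every row, each ball occurring in its Swiss-Cheese decomposition is nested with or disjoint from $b$; hence on the chosen side every ball lying outside the cut becomes either empty or equal to all surviving columns, drops out of the Boolean combination, and the complexity of the restricted row strictly decreases. The crux is to choose $b$ and the side so that (i) the surviving column side has size $\ge\tfrac12|Y|$ and (ii) for at least half of the rows the complexity genuinely drops. For (i) the natural choice is a laminar \emph{centroid}: a minimal ball with $|b\cap Y|\ge\tfrac12|Y|$, whose children each meet fewer than half of $Y$, so that both $Y\cap b$ and its complement are controllable. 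For (ii) I would run a pigeonhole argument over the rows, sorting them by whether their outermost relevant ball contains, is contained in, or is disjoint from $b$, and keeping the largest class.

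Iterating the cutting step at most $N$ times drives every surviving row to be identically empty or identically full on the surviving columns; a final majority vote over the rows makes them agree, producing a monochromatic rectangle. Multiplying the per-step halvings on each side by the constant overhead absorbed in the base case and this last vote, a careful accounting of the constants is intended to yield exactly $k_N=2^{-(N+4)}$ on both $|X'|$ and $|Y'|$, the $+4$ coming from the base-case and tie-breaking losses.

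I expect the main obstacle to be the \emph{simultaneous} control demanded by the cutting step: a single ball $b$ and a single side must at once preserve half the columns and reduce complexity for half the rows. Different rows are naturally simplified by cutting at different balls, so a naive most-popular-ball choice can either shave complexity for too few rows or destroy too many columns. Making one centroid cut serve a constant fraction of rows at once---and checking that holes are handled by the same bookkeeping as outer balls---is precisely where the nesting structure must be exploited, and I anticipate splitting into the case that many rows have an outer ball containing the centroid (yielding a full or empty trace directly) versus many rows whose unresolved structure lies strictly inside it.
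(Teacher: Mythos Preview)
Your overall framework---induction on $N$, with a cutting step that keeps a constant fraction of rows and columns while lowering complexity---matches the paper's. But the step you yourself flag as ``the main obstacle'' is a genuine gap that your outline does not close. Cutting by a single centroid ball $b$ does not work: if many rows have all of their balls strictly inside $b$, then restricting to $Y\cap b$ keeps every one of those balls, so complexity does not drop, while restricting to $Y\setminus b$ may discard almost all columns. Your fallback claim, that rows with an outer ball containing $b$ get ``a full or empty trace directly,'' is also false, since the holes of that Swiss cheese may lie inside $b$. So neither branch of your case split delivers the simultaneous control you need.

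The paper's resolution is to cut not by a single ball but by a carefully sized \emph{union} of small balls. Take $Z$ minimal in the finite family $\mathcal{F}$ of all occurring balls (together with $Y$) subject to $|Z|\ge\tfrac18|Y|$; by minimality its maximal proper sub-balls $C_1,\dots,C_m$ in $\mathcal{F}$ each have size $<\tfrac18|Y|$. The residue $R=Z\setminus\bigcup_t C_t$ is already homogeneous for \emph{every} row (any ball meeting $R$ must contain $Z$), so one may assume $|\bigcup_t C_t|\ge\tfrac1{16}|Y|$. Now take the shortest prefix $C=C_1\cup\cdots\cup C_{t_0}$ with $|C|\ge\tfrac1{32}|Y|$; since $|C_{t_0}|<\tfrac18|Y|$ one also gets $|C|\le\tfrac5{32}|Y|$. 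The key point is that every ball in $\mathcal{F}$ either contains $Z$, is contained in some $C_t$, or is disjoint from $Z$, hence is $\supseteq C$, $\subseteq C$, or disjoint from $C$: the set $C$ behaves like a ball with respect to $\mathcal{F}$ while having size bounded on \emph{both} sides, something no single ball in $\mathcal{F}$ can achieve. Split the rows: those with some ball $\subseteq C$ lose that ball on $Y\setminus C$, and $|Y\setminus C|\ge\tfrac{27}{32}|Y|$, so induction applies; the remaining rows have every ball either $\supseteq C$ or disjoint from $C$, forcing $E(a,C)\in\{\emptyset,C\}$, and $|C|\ge\tfrac1{32}|Y|$ finishes directly. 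This two-sided size control on $C$ is precisely the missing ingredient in your single-ball cut.
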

\noindent
\begin{acknowledgements}
The author is grateful to her advisor Sergei Starchenko for helpful suggestions.
\end{acknowledgements}
\section{Preliminaries}
\indent

 The following definitions \ref{directedfam}, \ref{swisscheese}, \ref{vcmincom} are based on notions in \cite{adler2008theories}. 
\begin{definition}\label{directedfam}
Given a set $U$, a family of subsets $\Psi=\{B_i:i\in I\}\subseteq\mathcal{P}(U)$, where $I$ is some index set, is called a \emph{directed family} if for any $B_i,B_j\in\Psi$, $B_i\subseteq B_j$ or $B_j \subseteq B_i$ or $B_i\cap B_j=\emptyset$.
\end{definition}
\begin{definition}\label{swisscheese}
 Given a directed family $\Psi$ of subsets of $U$, a set $B\in \Psi$ is a called a \emph{$\Psi$-ball}. A set $S \subseteq U$ is a \emph{$\Psi$-Swiss cheese} if $S = B \setminus (B_0 \cup ... \cup B_n)$, where each of $B,B_0,...,B_n$ is a $\Psi$-ball. We will call $B$ an \emph{outer ball} of $S$, and each $B_i$ is called a \emph{hole} of $S$.
\end{definition}
\begin{definition}
    A \emph{graph} $G$ is a pair $(V,E)$ where $V$ is a finite set of vertices and $E\subseteq V\times V$ is a binary symmetric anti-reflexive relation. 
    \end{definition}
\begin{definition}
A \emph{bipartite graph} is a triple $(X,Y;E)$ where $X$, $Y$ are finite sets, $X\cap Y=\emptyset$ and $E\subseteq X\times Y$ a symmetric relation.
\end{definition}
\begin{notation}
 Given a bipartite graph $(X,Y;E)$, $a\in X$, $S\subseteq Y$, we define $E(a,S)$ as the set $\{b\in S: (a,b)\in E\}$.   
\end{notation}
\begin{definition}\label{vcmincom}
    Given a finite bipartite graph $(X,Y;E)$, we say it has \emph{VC-minimal complexity $<N$} if there is a directed family $\Psi$ of subsets of $Y$ such that for each $a\in X$, $E(a,Y)$ is a finite disjoint union of $\Psi$-Swiss cheeses and the number of outer balls $+$ the number of holes $<N$. i.e. if $E(a,Y)=(B_{11}\setminus (B_{12}\cup...\cup B_{1d(1)}))$ $\dot\cup$ $...$ $\dot\cup$ $(B_{s1}\setminus (B_{s2}\cup...\cup B_{sd(s)}))$ then $d(1)+...+d(s)<N$.
    \end{definition}
\section{Proof}
\begin{theorem}
\label{mainthm}
 For $N>0$, let $k_N=\dfrac{1}{2^{N+4}}$. If a finite bipartite graph $(X,Y;E)$ has VC-minimal complexity $<N$ then there exist $X'\subseteq X$, $Y'\subseteq Y$ with $|X'|\geq k_N|X|$, $|Y'|\geq k_N|Y|$ such that $X'\times Y'\subseteq E$ or $X'\times Y' \cap E=\emptyset$.
\end{theorem}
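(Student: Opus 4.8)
The plan is to exploit the tree structure hidden in Definition~\ref{directedfam}: since $\Psi$ is directed, its members form a forest under inclusion, and the \emph{atoms} of $\Psi$ (the sets $B\setminus\bigcup\{B'\in\Psi:B'\subsetneq B\}$ together with $Y\setminus\bigcup\Psi$) partition $Y$. The single observation driving everything is that if $Z\subseteq Y$ is \emph{indivisible}, i.e. every $\Psi$-ball either contains $Z$ or is disjoint from $Z$, then for each $a\in X$ the trace $E(a,Y)\cap Z$ is either all of $Z$ or empty; indeed $E(a,Y)$ is a disjoint union of Swiss cheeses, and $Z$ is contained in or disjoint from every outer ball and every hole. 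Consequently, once I produce a single indivisible $Z$ with $|Z|$ a fixed fraction of $|Y|$, I may sort $X$ by whether $Z\subseteq E(a,Y)$ or $Z\cap E(a,Y)=\emptyset$ and keep the larger class $X'$; then $X'\times Z$ is homogeneous and $|X'|\ge|X|/2$. Thus the problem reduces to locating one large indivisible region, at the cost of a single factor of $2$ on the $X$-side.

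First I would set up an induction on $N$. The base case is vacuous: complexity $<1$ forces $E(a,Y)=\emptyset$, so $X\times Y$ works. For the inductive step let $B_1,\dots,B_m$ be the maximal balls of $\Psi$ and put $Y_0=Y\setminus\bigcup_i B_i$. Since every $E(a,Y)$ is contained in $\bigcup_i B_i$, the region $Y_0$ is indivisible and empty for every $a$, so if $|Y_0|\ge|Y|/2$ we finish with $X'=X$ and $Y'=Y_0$. Otherwise $\sum_i|B_i|>|Y|/2$, and I would \emph{peel} a maximal ball $B$: split $X$ into the \emph{internal} vertices $X_{\mathrm{in}}=\{a:E(a,Y)\subseteq B\}$ and the \emph{crossing} vertices $X_{\mathrm{cr}}=X\setminus X_{\mathrm{in}}$. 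By maximality of $B$ every ball of every $a$ is contained in or disjoint from $B$, so restricting a crossing vertex to $B$ discards at least one of its balls: the bipartite graph $(X_{\mathrm{cr}},B)$, with $E$ restricted to $X_{\mathrm{cr}}\times B$, has VC-minimal complexity $<N-1$, and the induction hypothesis applies inside $B$. Meanwhile each internal vertex is empty on $Y\setminus B$, giving the empty rectangle $X_{\mathrm{in}}\times(Y\setminus B)$. Whichever of these two sides carries the larger share of $X$ is the one I would develop.

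The main obstacle is the quantitative bookkeeping that forces the constant $2^{-(N+4)}$, and two regimes compete. When one maximal ball $B$ is large and absorbs most neighborhoods — so $X_{\mathrm{in}}$ dominates while $Y\setminus B$ is too small to host the empty rectangle — peeling fails to lower $N$ and one must recurse into $B$ at the same complexity; here $B$ should be chosen so that $|B|$ drops geometrically and only $O(N)$ such descents occur. In the opposite regime of many small balls I would produce a large \emph{empty} rectangle directly: take $Y'=\bigcup_{j\in J}B_j$ for a uniformly random subset $J$ of the maximal balls and let $X'$ consist of the vertices $a$ with $E(a,Y)\cap Y'=\emptyset$. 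Then $\mathbb{E}\,|Y'|=\tfrac12\sum_i|B_i|>|Y|/4$, while $\mathbb{E}\,|X'|\ge 2^{-(N-1)}|X|$, because each $a$ meets at most $N-1$ of the $B_i$ (disjoint maximal balls contribute distinct outer balls to the Swiss-cheese decomposition of $E(a,Y)$), so the probability that $a$ avoids $J$ is at least $2^{-(N-1)}$. The step I expect to be hardest is to secure both estimates for one and the same $J$ and to splice this averaging argument onto the peeling recursion, so that the combined $X$-loss (the factor $2^{-(N-1)}$ from the avoider count and a further $1/2$ from the final pigeonhole) and the constant $Y$-loss telescope to exactly $k_N=2^{-(N+4)}$.
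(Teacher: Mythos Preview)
Your outline isolates the right phenomena---indivisible regions are homogeneous for every $a$, and restricting past a ball that absorbs one of $a$'s $B^a_{kl}$ lowers the complexity---but the argument is genuinely incomplete at exactly the point you flag, and neither branch can be closed as written. The random choice of $J$ does not work: having $\mathbb E|Y'|>|Y|/4$ and $\mathbb E|X'|\ge 2^{-(N-1)}|X|$ separately gives no single $J$ with both large, and for instance when $Y=B_1\sqcup B_2$ and every $a$ meets both balls one has $|X'|\cdot|Y'|=0$ for every $J$. The peeling branch has the symmetric defect: for a single maximal ball $B$ nothing forces $|Y\setminus B|$ to be a fixed fraction of $|Y|$, and your fallback (``recurse into $B$ at the same complexity, with $|B|$ dropping geometrically in $O(N)$ steps'') is unjustified---if $|B|=|Y|-1$ the descent is not geometric and could repeat $|Y|$ times.

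The paper replaces both the randomness and the single-ball peeling by one deterministic construction. Work only with the finite family $\mathcal F$ of balls that actually occur in the decompositions, together with $Y$, and choose a \emph{minimal} $Z\in\mathcal F$ with $|Z|\ge|Y|/8$; minimality forces every member of $\mathcal F$ properly inside $Z$ to have size $<|Y|/8$. Let $C_1,\dots,C_m$ be the maximal such members and $R=Z\setminus\bigcup_i C_i$; your indivisibility observation applies to $R$, so if $|R|\ge|Y|/16$ one finishes immediately. Otherwise take the shortest prefix $C=C_1\cup\dots\cup C_{t_0}$ with $|C|\ge|Y|/32$: since each $|C_i|<|Y|/8$, one gets $|Y|/32\le|C|\le 5|Y|/32$, so both $|C|$ and $|Y\setminus C|$ are fixed fractions of $|Y|$. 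Now split $X$ into $A_1=\{a:\exists k,l\ B^a_{kl}\subseteq C\}$ and $A_2=X\setminus A_1$: for $a\in A_1$ a ball disappears on $Y\setminus C$, so the induction hypothesis applies to $(A_1,Y\setminus C)$; for $a\in A_2$ every ball of $a$ meeting $C$ must contain all of $Z\supseteq C$, so $C$ is indivisible for $A_2$ and one finishes directly with $(A_2,C)$. The idea you are missing is that the set playing the role of your $B$ must be a \emph{union of several small balls} whose total size is calibrated to lie in a fixed window---no single maximal ball and no random subset will do---and the minimal-$Z$ trick is precisely what guarantees the pieces are small enough to assemble such a $C$.
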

\begin{proof}Fix a directed family $\Psi$ for $(X,Y;E)$.\\
\indent
     We prove by induction on $N$. If $N=1$ then for all $a\in X$, $E(a,Y)=\emptyset$. So $X\times Y\subseteq\neg E$.\\
\indent
Suppose true for $N$ and we show for $N+1$.
\\
\indent
Let $(X,Y;E)$ be a finite biipartiite graph with VC-minimal complexity $<N+1$.
Then there is a directed family $\Psi$ such that for each $a\in X$,  
\begin{equation*}
    E(a,Y)=(B^a_{11}\setminus (B^a_{12}\cup...\cup B^a_{1d(1)}))\dot\cup... \dot\cup(B^a_{s_a1}\setminus (B^a_{s_a2}\cup...\cup B^a_{s_ad(s_a)}))
\end{equation*}
where the $B^a_{kl}$'s are $\Psi$-balls and $d(1)+...+d(s_a)<N+1$.
Consider the finite family \begin{equation*}
    \mathcal{F}:=\{B^a_{kl}:a\in X, k,l\in\mathbb{N}\}\cup\{Y\}.
    \end{equation*}
    Since $\mathcal{F}$ is finite and $|Y|\geq\frac{1}{8}|Y|$, 
    there is a minimal $Z\in\mathcal{F}$ such that $|Z|\geq\frac{1}{8}|Y|$ 
    (minimal with respect to the partial order $\subseteq$). 
    Let 
    \begin{equation*}
        \mathcal{F}':=\{B^a_{kl}:a\in X, k,l\in\mathbb{N}, B^a_{kl}\subsetneq Z\}. 
        \end{equation*}
        Let $C_1,...,C_m$ be maximal elements in $\mathcal{F}'$. Then $\forall a\in X$, $\forall k,l \in\mathbb{N}$, $\forall t\in\{1,...,m\}$, if $B^a_{kl}\subsetneq Z$ then $B^a_{kl}\cap C_t=\emptyset$ or $B^a_{kl}\subseteq C_{t}$. Let $R=Z\setminus (C_1\cup...\cup C_{m})$.
\begin{claim}   \label{claim}
 $\forall a\in X$, $E(a,R)=R$ or $E(a,R)=\emptyset$.
 \end{claim}
\begin{proof}
     $E(a,Y)=(B^a_{11}\setminus (B^a_{12}\cup...\cup B^a_{1d(1)}))$ $\dot\cup$ $...$ $\dot\cup$ $(B^a_{s_a1}\setminus (B^a_{s_a2}\cup...\cup B^a_{s_ad(s_a)}))$.
     Suppose $E(a,R)\neq\emptyset$. 
     Then for some $k\in\{1,...,s_a\}$, 
\begin{equation*}
    (B^a_{k1}\setminus (B^a_{k2}\cup...\cup B^a_{kd(k)}))\cap R\neq\emptyset.
    \end{equation*}
     May assume $(B^a_{11}\setminus (B^a_{12}\cup...\cup B^a_{1d(1)}))\cap R\neq\emptyset$. 
     So $B^a_{11}\cap Z\neq\emptyset$. 
     Since $Z$ is $Y$ or a $\Psi$-ball, 
     $B^a_{11}\subsetneq Z$ or 
     $B^a_{11}\supseteq Z$. 
     If $B^a_{11}\subsetneq Z$ then $B^a_{11}\subseteq C_1\cup...\cup C_{m}$ and $B^a_{11}\cap R=\emptyset$,
     a contradiction. 
     Hence $B^a_{11}\supseteq Z$. 
     Similarly, for any hole $K\in\{B^a_{12},...,B^a_{1d(1)}\}$, 
     if $K\cap R\neq\emptyset$, 
     then $K\subsetneq Z$ or $K\supseteq Z$. 
     If $K\subsetneq Z$ then $K\subseteq C_1\cup...\cup C_{m}$ and $K\cap R=\emptyset$, a contradiction. 
     If $K\supseteq Z$,
     then 
     \begin{equation*}
         (B^a_{11}\setminus (B^a_{12}\cup...\cup B^a_{1d(1)}))\cap R\subseteq (B^a_{11}\setminus (B^a_{12}\cup...\cup B^a_{1d(1)}))\cap Z=\emptyset,
         \end{equation*}
         a contradiction. 
     Hence we must have $Z\subseteq B^a_{11}$ and $K\cap R=\emptyset$ for all $K\in\{B^a_{12},...,B^a_{1d(1)}\}$.
     So $R\subseteq B^a_{11}\setminus (B^a_{12}\cup...\cup B^a_{1d(1)})\subseteq E(a;Y)$.
     \end{proof}
\indent
Since $R\cup C_1\cup ...\cup C_m=Z$ and $|Z|\geq \frac{1}{8}|Y|$, by claim \ref{claim}, we may assume that $|C_1\cup...\cup C_{m}|\geq\frac{1}{16}|Y|$.\\
\indent
Let $t_0$ be smallest such that $|C_1\cup...\cup C_{t_0}|\geq\frac{1}{32}|Y|$. Because $|C_1\cup...\cup C_{t_0-1}|<\frac{1}{32}|Y|$ and $|C_{t_0}|<\frac{1}{8}|Y|$ (by minimality of $Z$), 
\begin{equation*}
    \frac{1}{32}|Y|\leq|C_1\cup...\cup C_{t_0}|\leq(\frac{1}{32}+\frac{1}{8})|Y|.
    \end{equation*}
    Let $C:=C_1\cup...\cup C_{t_0}$.\\
\indent
Consider 
\begin{equation*}
    A_1:=\{a\in X:\exists k,l\in\mathbb{N},
    B^a_{kl}\subseteq C\}, 
    \end{equation*}
    \begin{equation*}
        A_2:=\{a\in X:\forall k,l\in\mathbb{N},
        B^a_{kl}\nsubseteq C\}. 
        \end{equation*}
        \indent
Since $A_1\cup A_2=X$,
we have $|A_1|\geq\frac{1}{2}|X|$ or $|A_2|\geq\frac{1}{2}|X|$.\\
\indent
Suppose $|A_1|\geq\frac{1}{2}|X|$. For $a\in A_1$, 
\begin{equation*}
    E(a,Y\setminus C)=((B^a_{11}\setminus B^a_{12}\cup...\cup B^a_{1d(1)})\cap(Y\setminus C))\dot\cup...
    \end{equation*}
    \begin{equation*}
    \dot\cup((B^a_{s_a1}\setminus B^a_{s_a2}\cup...\cup B^a_{s_ad(s_a)})\cap (Y\setminus C))
    \end{equation*}
    \begin{equation*}
        =(((B^a_{11}\cap (Y\setminus C))\setminus (B^a_{12}\cap (Y\setminus C))\cup...\cup (B^a_{1d(1)}\cap(Y\setminus C)))\dot\cup...
        \end{equation*}
        \begin{equation*}
\dot\cup(B^a_{s_a1}\cap(Y\setminus C))\setminus (((B^a_{s_a2}\cap(Y\setminus C))\cup...\cup (B^a_{s_ad(s_a)}\cap(Y\setminus C)))).
\end{equation*}
Since $a\in A_1$, for some $B^a_{kl}$, $B^a_{kl}\subseteq C$. If $B^a_{kl}$ is an outer ball, say $B^a_{kl}=B^a_{11}$, then
\begin{equation*}
    E(a,Y\setminus C)=((B^a_{21}\cap (Y\setminus C))\setminus (B^a_{22}\cap (Y\setminus C))\cup...\cup (B^a_{2d(2)}\cap(Y\setminus C)))\dot\cup...
    \end{equation*}
    \begin{equation*}
        \dot\cup
    (B^a_{s_a1}\cap(Y\setminus C))\setminus (((B^a_{s_a2}\cap(Y\setminus C))\cup...\cup (B^a_{s_ad(s_a)}\cap(Y\setminus C)))).
    \end{equation*}
    If $B^a_{kl}$ is a hole, say $B^a_{kl}=B^a_{12}$, then
    \begin{equation*}
        E(a,Y\setminus C)=((B^a_{11}\cap (Y\setminus C))\setminus (B^a_{13}\cap (Y\setminus C))\cup...\cup (B^a_{1d(1)}\cap(Y\setminus C)))\dot\cup...
        \end{equation*}
        \begin{equation*}
\dot\cup(B^a_{s_a1}\cap(Y\setminus C))\setminus (((B^a_{s_a2}\cap(Y\setminus C))\cup...\cup (B^a_{s_ad(s_a)}\cap(Y\setminus C)))). 
    \end{equation*}
    Hence $(A_1, Y\setminus C; E)$ is a bipartite graph of VC-minimal complexity $<N$ such that for any $a\in A_1$, 
    $E(a,Y\setminus C)$ is a disjoint union of $\Psi'$-Swiss cheeses,
    where 
    \begin{equation*}
        \Psi':=\{D\cap (Y\setminus C): D\in\Psi\}.
        \end{equation*}
    By inductive hypothesis, there exist $F\subseteq A_1$, $G\subseteq Y\setminus C$ with
    \begin{equation*}
  |F|\geq k_N|A_1|\geq \frac{1}{2}k_N|X|,
  \end{equation*}
  \begin{equation*}
      |G|\geq k_N|Y\setminus C|\geq (1-\frac{1}{32}-\frac{1}{8})k_N|Y|
      \end{equation*}
      such that $F\times G\subseteq E$ or $F\times G\subseteq\neg E$. So the conclusion holds for $N+1$.\\
\indent
Suppose $|A_2|\geq\frac{1}{2}|X|$. 
Then $\forall a\in A_2$, $\forall k,l\in\mathbb{N}$,
$B^a_{kl}\nsubseteq C$.
\begin{claim}
    $\forall a\in A_2$, $E(a,C)=\emptyset$ or $E(a,C)=C$ by defintion of $A_2
$.
\end{claim}
\begin{proof}
We first show that for all $a\in A_2$,
for all $k,l\in\mathbb{N}$,
if $B^a_{kl}\cap C\neq \emptyset$
then $C\subseteq B^a_{kl}$.
\\
\indent
Fix $a\in A_2$ and $k,l\in\mathbb{N}$.    
If $B^a_{kl}\cap C\neq\emptyset$, 
then $B^a_{kl}\cap Z\neq\emptyset$. 
So $B^a_{kl}\subsetneq Z$ or $B^a_{kl}\supseteq Z$. 
If $B^a_{kl}\subsetneq Z$, 
then $B^a_{kl}\subseteq C_t$ for some $t\in\{1,...,m\}$.
But since $B^a_{kl}\nsubseteq C$, 
$B^a_{kl}\cap C=\emptyset$, 
a contradition.
Hence we must have $B^a_{kl}\supseteq Z$ when $B^a_{kl}\cap C\neq\emptyset$. 
Thus $\forall a\in A_2$, $\forall k,l\in\mathbb{N}$, 
$B^a_{kl}\cap C=\emptyset$ or $B^a_{kl}\supseteq C$.
\\
\indent
For $a\in A_2$,
if $E(a,C)\neq\emptyset$, 
may assume $C\cap (B^a_{11}\setminus(B^a_{12}\cup...\cup B^a_{1d(1)}))\neq\emptyset$. 
So $C\cap B^a_{11}\neq\emptyset$ and $C\subseteq B^a_{11}$.
For any hole $K\in\{B^a_{12},...,B^a_{1d(1)}\}$, 
if $C\cap K\neq\emptyset$, 
then $C\subseteq K$ and $C\cap B^a_{11}\setminus(B^a_{12}\cup...\cup B^a_{1d(1)}))=\emptyset$, 
a contradiction.
So for any hole $K$,
$C\cap K=\emptyset$.
Thus $C\subseteq
(B^a_{11}\setminus(B^a_{12}\cup...\cup B^a_{1d(1)}))$.
Hence $\forall a\in A_2$, $E(a,C)=\emptyset$ or $E(a,C)=C$.
\end{proof}
So the conclusion holds for $N+1$ (because $|A_2|\geq\frac{1}{2}|X|$ and $|C|\geq\frac{1}{32}|Y|$). 
\end{proof}
\section{Corollary}
\indent 

We can apply theorem \ref{mainthm} to VC-minimal theories (ACVF in particular). The following notions and fact about VC-minimal theories come from \cite{adler2008theories}. We rephrase them as in \cite{cotter2012forking} for notational convenience. 
\begin{definition}
 \cite[Definition 5.]{adler2008theories}
\cite[Definition~2.1.(1)]{cotter2012forking} A set of formulae $\Psi = \{\psi_i(x, \Bar{y_i}) : i \in I\}$ is called a \emph{directed family} if for any
$\psi_0(x, \Bar{y_0}), \psi_1(x, \Bar{y_1}) \in \Psi$ and any parameters $\Bar{a_0}, \Bar{a_1}$ taken from any model
of $T$, one of the following is true:\\
(i): $\psi_0(x, \Bar{a_0}) \subseteq \psi_1(x, \Bar{a_1})$;\\
(ii): $\psi_1(x, \Bar{a_1}) \subseteq \psi_0(x, \Bar{a_0})$;\\
(iii): $\psi_0(x, \Bar{a_0}) \cap \psi_1(x, \Bar{a_1}) = \emptyset$.  
\end{definition}
\begin{definition}
\cite[Definition~3.]{adler2008theories}\cite[Definition~2.1.(2)]{cotter2012forking} A theory $T$ is \emph{VC-minimal} if there is a directed family $\Psi$ such that for any
formula $\varphi(x,\Bar{y})$ and any parameters $\Bar{c}$ taken from any model of $T$, $\varphi(x,\Bar{c})$ is equivalent to a finite boolean combination of formulae $\psi_i(x,\Bar{b_i})$, where each $\psi_i \in \Psi$.
\end{definition}
\begin{fact}
\cite[Proposition~7.]{adler2008theories}\cite[Theorem~2.6.]{cotter2012forking} 
\label{fact}
Fix $T$ a VC-minimal theory and a directed family of formulae $\Psi$ for $T$. For every formula $\tau(x,\Bar{y})$, there are a finite set $\Psi_0 \subseteq \Psi$ and natural numbers $n_1$ and $n_2$ such that for every parameter tuple $\Bar{a}$, $\tau(x,\Bar{a})$ can be decomposed as the union of at most $n_1$ disjoint Swiss cheeses, each of them having at most $n_2$ holes, such that all balls appearing in the decomposition are instances of formulae in $\Psi_0$.
\end{fact}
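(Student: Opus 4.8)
The plan is to split Fact~\ref{fact} into a model-theoretic \emph{uniformity} step, obtained by compactness from VC-minimality, and a purely combinatorial \emph{laminarity} step that turns a Boolean combination of nested-or-disjoint balls into disjoint Swiss cheeses. First I would prove the uniform bound: there are a finite $\Psi_0\subseteq\Psi$ and an $n\in\mathbb{N}$ such that for every $\bar a$ the set $\tau(x,\bar a)$ is equivalent to a Boolean combination of at most $n$ instances of formulae from $\Psi_0$. The definition of VC-minimality gives this for each individual $\bar a$, but with $\Psi_0$ and the number of balls depending on $\bar a$; the content of the step is to make these choices uniform.

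The engine is the observation that, for each fixed finite $\Psi_0$ and each fixed $n$, the assertion $\sigma_{\Psi_0,n}(\bar y)$ that ``$\tau(x,\bar y)$ is equivalent to a Boolean combination of at most $n$ instances of $\Psi_0$-formulae'' is first-order in $\bar y$. Indeed, since $\Psi_0$ is finite and $m\le n$, there are only finitely many choices of a list $\psi_{i_1},\dots,\psi_{i_m}\in\Psi_0$ and of a propositional shape $\beta$, and for each such choice the statement $\exists\,\bar b_1\cdots\bar b_m\,\forall x\,\bigl(\tau(x,\bar y)\leftrightarrow\beta(\psi_{i_1}(x,\bar b_1),\dots,\psi_{i_m}(x,\bar b_m))\bigr)$ is a genuine formula; $\sigma_{\Psi_0,n}$ is their finite disjunction. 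Now consider the partial type $\Sigma(\bar y)=\{\neg\sigma_{\Psi_0,n}(\bar y):\Psi_0\subseteq\Psi\text{ finite},\,n\in\mathbb{N}\}$. Any realisation $\bar a^\ast$ of $\Sigma$ in a monster model would witness that $\tau(x,\bar a^\ast)$ is equivalent to no finite Boolean combination of $\Psi$-instances at all, contradicting VC-minimality of $T$; hence $\Sigma$ is inconsistent. By compactness some finite subset is inconsistent, and since the $\sigma_{\Psi_0,n}$ are directed (enlarging $\Psi_0$ or $n$ only weakens the requirement, so $\sigma_{\Psi_0,n}\to\sigma_{\Psi_0',n'}$ whenever $\Psi_0\subseteq\Psi_0'$ and $n\le n'$), passing to the union of the finitely many $\Psi_0$'s and the maximum of the $n$'s yields a single pair $(\Psi_0,n)$ with $\models\forall\bar y\,\sigma_{\Psi_0,n}(\bar y)$. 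This is the desired uniform bound.

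For the laminarity step, fix $\bar a$ and write $\tau(x,\bar a)=\beta(B_1,\dots,B_m)$ with $m\le n$, each $B_i$ an instance of a formula in $\Psi_0$. Because $\Psi_0\subseteq\Psi$ is directed, the balls $B_1,\dots,B_m$ are pairwise nested or disjoint, so, after adjoining the trivial ball $\top=\{x:x=x\}$ as an outer ball (I would include $x=x$ among the balls, as is standard for such decompositions), they form a tree under $\subseteq$. For a laminar family the set of balls containing any given point is a chain, so each point's atom in the generated Boolean algebra is determined by the smallest ball containing it; the atoms are exactly the sets $B\setminus\bigcup\{B':B'\subsetneq B\text{ maximal among the }B_i\}$, one per node $B$, and each is a single Swiss cheese with outer ball $B$ and holes the children of $B$. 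Since $\tau(x,\bar a)$ is a union of atoms and distinct atoms are disjoint, $\tau(x,\bar a)$ is a disjoint union of at most $m+1\le n+1=:n_1$ Swiss cheeses, each with at most $m\le n=:n_2$ holes, all of whose balls are instances of $\Psi_0$-formulae. As $n_1,n_2$ depend only on $n$, this is exactly the decomposition asserted in Fact~\ref{fact}.

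The main obstacle is the uniformity step, and within it the two points that must be checked with care: that $\sigma_{\Psi_0,n}$ really is first-order (the $\exists\bar b\,\forall x$ shape, together with finiteness of the choices of $\psi$'s and of $\beta$), and that the directedness of the $\sigma_{\Psi_0,n}$ lets one collapse an inconsistent finite fragment of $\Sigma$ to a single pair $(\Psi_0,n)$. By contrast the laminarity step is the same tree-of-balls bookkeeping already used in the proof of Theorem~\ref{mainthm} — selecting a ball as an outer ball and its maximal proper sub-balls as holes — and is routine once the uniform $n$ is available.
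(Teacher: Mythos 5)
The paper does not prove Fact~\ref{fact} at all --- it is quoted from \cite{adler2008theories} and \cite{cotter2012forking} --- and your reconstruction is correct and is essentially the standard argument given in those sources: a compactness step exploiting that ``$\tau(x,\bar y)$ is a Boolean combination of at most $n$ instances of the finite set $\Psi_0$'' is a single first-order condition $\sigma_{\Psi_0,n}(\bar y)$ (so the failure type $\Sigma$ is inconsistent by VC-minimality, and monotonicity of $\sigma_{\Psi_0,n}$ in $(\Psi_0,n)$ collapses a finite inconsistent fragment to one uniform pair), followed by the laminar-family observation that after adjoining $x=x$ the balls form a tree whose node-atoms $B\setminus\bigcup\{\text{children of }B\}$ are disjoint Swiss cheeses, giving $n_1=n+1$, $n_2=n$. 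Your explicit flagging of the convention that $x=x$ may be added to $\Psi$ (harmless, since every instance of $\Psi$ is contained in it, so directedness is preserved and the outer ball of the unbounded atom is a legitimate $\Psi_0$-instance) closes the only delicate point, so nothing further is needed.
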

\indent
By fact \ref{fact}, for any VC-minimal theory $T$, any model $\mathcal{M}\models T$ and any difinable relation $E\subseteq M\times M$, there is $N\in\mathbb{N}^{>0}$ such that for any finite disjoint $X,Y\subseteq M$, the bipartite graph $(X,Y;E)$ has VC-minimal complexity $<N$. Thus we have:
\begin{corollary}
\label{corollary}
    Given a VC-minimal theory $T$, a model $\mathcal{M}\models T$ and an $\mathcal{L}$-formula $\varphi(x,y,\Bar{z})$, let $N\in\mathbb{N}^{>0}$ satisfy: for any $b\in M$, $\Bar{c}\in M^{|\Bar{z}|}$, $\varphi(x,b,\Bar{c})$ can be decomposed as the union of at most $n_1$ disjoint Swiss cheeses, each of them having at most $n_2$ holes, with $n_1n_2<N$. Then for any fixed $\Bar{c}\in M^{|\Bar{z}|}$, any pair of finite sets $X\subseteq M$, $Y\subseteq M$ with $X\cap Y=\emptyset$, there exist $X'\subseteq X$, $Y'\subseteq Y$ such that $|X'|\geq \dfrac{1}{2^{N+4}}|X|$, $Y'\geq\dfrac{1}{2^{N+4}}|Y|$, and $\forall x\in X'$ $\forall y\in Y'$ $\varphi(x,y,\Bar{c})$ or $\forall x\in X'$ $\forall y\in Y'$ $\neg\varphi(x,y,\Bar{c})$. 
    
\end{corollary}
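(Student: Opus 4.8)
The plan is to deduce the corollary directly from Theorem \ref{mainthm}: for a fixed parameter tuple $\bar c$ and fixed finite disjoint $X,Y\subseteq M$ I would manufacture a bipartite graph whose VC-minimal complexity is controlled by $N$, apply Theorem \ref{mainthm}, and then read the homogeneous rectangle it produces back as a statement about $\varphi$. The only genuine content is translating the model-theoretic Swiss cheese decomposition into the purely combinatorial notion of Definition \ref{vcmincom}. Note first that the hypothesis decomposes $\varphi(x,b,\bar c)$ in the variable $x$ (it is exactly the instance of Fact \ref{fact} for $\tau(x,\bar y):=\varphi(x,y,\bar z)$ with $\bar y=(y,\bar z)$), so the Swiss cheeses live in $x$-space. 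Accordingly I orient the graph so that the $x$-side carries the directed family: set $G:=(Y,X;E)$ with $(b,a)\in E\iff\mathcal{M}\models\varphi(a,b,\bar c)$, so that for each $b\in Y$ the neighborhood $E(b,X)=\{a\in X:\mathcal{M}\models\varphi(a,b,\bar c)\}$ is the localization to $X$ of the definable set defined by $\varphi(x,b,\bar c)$.

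Next I would produce the single directed family of subsets of $X$ demanded by Definition \ref{vcmincom}. Let $\Psi_{\mathrm{theory}}$ be the VC-minimal directed family of $T$; the point of Fact \ref{fact} is that there is a fixed finite $\Psi_0\subseteq\Psi_{\mathrm{theory}}$ such that, uniformly in $b$, every ball in the decomposition of $\varphi(x,b,\bar c)$ is an instance of a formula in $\Psi_0$. Define $\Psi:=\{\{a\in X:\mathcal{M}\models\psi(a,\bar d)\}:\psi\in\Psi_{\mathrm{theory}},\ \bar d\text{ a parameter tuple}\}$. Directedness of $\Psi_{\mathrm{theory}}$ says any two instances are nested or disjoint in $M$, and this property is preserved under intersecting with the fixed set $X$, so $\Psi$ is a directed family of subsets of $X$ (this verification is routine). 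Localizing each Swiss cheese decomposition of $\varphi(x,b,\bar c)$ to $X$ then exhibits every $E(b,X)$ as a disjoint union of $\Psi$-Swiss cheeses, and crucially the same $\Psi$ works for all $b\in Y$.

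It then remains to bound the total number of balls per row. Each $E(b,X)$ uses at most $n_1$ outer balls and at most $n_1n_2$ holes, hence at most $n_1(n_2+1)$ balls in all; once $N$ exceeds this count, $G$ has VC-minimal complexity $<N$ in the sense of Definition \ref{vcmincom}. Applying Theorem \ref{mainthm} to $G$ yields $Y'\subseteq Y$ and $X'\subseteq X$ with $|Y'|\ge k_N|Y|$, $|X'|\ge k_N|X|$ and either $Y'\times X'\subseteq E$ or $Y'\times X'\cap E=\emptyset$, where $k_N=1/2^{N+4}$. Unwinding the definition of $E$, these two alternatives read respectively as $\forall x\in X'\,\forall y\in Y'\,\varphi(x,y,\bar c)$ and $\forall x\in X'\,\forall y\in Y'\,\neg\varphi(x,y,\bar c)$, which is exactly the claimed dichotomy with the claimed constant; since the conclusion is symmetric in $X'$ and $Y'$, the orientation swap is harmless.

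I expect the main (though modest) obstacle to be the bookkeeping of the last paragraph: pinning down the exact relation between the parameters $n_1,n_2$ of the model-theoretic decomposition and the ``outer balls $+$ holes'' count of Definition \ref{vcmincom}, so that the hypothesis on $N$ genuinely forces VC-minimal complexity $<N$ — strictly one controls $n_1(n_2+1)$ rather than $n_1n_2$, so one may wish to replace $n_1n_2<N$ by $n_1(n_2+1)\le N$ or to absorb the discrepancy into the choice of $N$. The only conceptual step is the uniformity supplied by the fixed finite $\Psi_0$ of Fact \ref{fact}, which is what permits a single directed family $\Psi$ to serve every row at once; everything else (directedness under localization, the translation back to $\varphi$) is routine.
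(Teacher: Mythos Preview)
Your proposal is correct and follows exactly the paper's approach: the paper's entire proof is the sentence preceding the corollary, which sketches the same reduction to Theorem~\ref{mainthm} via Fact~\ref{fact}. Your care with the orientation (placing the directed family on the $x$-side, since the hypothesis decomposes $\varphi(x,b,\bar c)$ in the variable $x$) and your observation that the ball count is really $n_1(n_2+1)$ rather than $n_1 n_2$ are more precise than what the paper itself records.
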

\begin{remark}
     \cite[Example~4.11.(2)]{chernikov2018regularity} shows: Let $\mathcal{M}\models ACVF_{0,0}$ and let a formula $\varphi(x, y,\Bar{z})$ be given. Then there is some $\delta = \delta(\varphi) > 0$ such that for any definable relation $E(x, y) = \varphi(x, y,\Bar{c})$ for some $\Bar{c}\in M^{|\Bar{z}|}$ and finite disjoint $X\subseteq M$, $Y\subseteq M$, there are some $X'\subseteq X$, $Y'\subseteq Y$ with $|X'| \geq\delta|X|$, $|Y'|\geq\delta|Y|$ and $X'\times Y'\subseteq E$ or $X'\times Y'\subseteq\neg E$. By \cite{holly1995canonical}, $ACVF$ has Swiss Cheese decomposition and thus is a VC-minimal theory. So by corollary \ref{corollary}, the same conclusion also holds in $ACVF_{p,q}$ for nonzero $p,q$. (Note: \cite[Example~4.11.(2)]{chernikov2018regularity} allows $|x|>1$ and $|y|>1$ for definable relations $E(x,y)$ in $ACVF_{0,0}$. But we don't know whether the conclusion of theorem \ref{mainthm} holds when $|x|>1$, $|y|>1$ for definable relations $E(x,y)$ in $ACVF_{p,q}$ with nonzero $p,q$, since we only have Swiss cheese decomposition for one-variable formulas in $ACVF$.)
     \end{remark}
     \begin{remark}
     In \cite{chudnovsky2020pure},
     Chudnovsky, Scott, Seymour, and Spirkl proved
 that
     \begin{fact}\cite[1.2.]{chudnovsky2020pure}
     For every forest $H$,
     there exists $\epsilon > 0$ such that for every graph $G$ with $|G| > 1$ that
is both $H$-free and $\overline{H}$-free,
there is a pair of disjoint subsets $(A, B)$ with $|A|, |B| \geq\epsilon|G|$ such that $A\times B\subseteq E$ or $A\times B\cap E=\emptyset$.
 \end{fact}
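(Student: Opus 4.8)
The plan is to deduce the statement from two reductions followed by one genuinely combinatorial core, exploiting that the hypothesis ``$G$ is both $H$-free and $\overline{H}$-free'' is symmetric under complementation, since $G$ contains an induced copy of $H$ exactly when $\overline{G}$ contains an induced copy of $\overline{H}$. First I would invoke a theorem of R\"odl: because $G$ is $H$-free, there are a constant $\delta=\delta(H)>0$ and a set $Z\subseteq V(G)$ with $|Z|\geq\delta|G|$ such that $G[Z]$ has edge density $<\tau$ or $>1-\tau$, for a small constant $\tau$ to be fixed later. If the density is $<\tau$, I work in $G[Z]$ directly; if it is $>1-\tau$, I work instead in the complement $\overline{G}[Z]=\overline{G[Z]}$, which has density $<\tau$ and, because $G$ is $\overline{H}$-free, is also $H$-free. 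In either case a pair of disjoint sets with no edges between them (an anticomplete pair) in the sparse graph yields a pure pair, i.e.\ a pair that is complete or anticomplete, in the original $G$. So it suffices to treat sparse $H$-free graphs and to hunt only for anticomplete pairs.

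Second, I would prune to bounded fractional degree. A graph on $m$ vertices of density $<\tau$ has average degree $<\tau m$, so at least $m/2$ of its vertices have degree $<2\tau m$; restricting to this set $W$ keeps a linear fraction of the vertices while forcing the maximum degree inside $W$ to be at most $\gamma|W|$ with $\gamma=O(\tau)$. The task therefore reduces to the core claim: \emph{for every forest $H$ there exist $\gamma,\epsilon>0$ such that every $H$-free graph on $m\geq 2$ vertices with maximum degree $<\gamma m$ contains an anticomplete pair $(A,B)$ with $|A|,|B|\geq\epsilon m$.}

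I would prove the core claim by induction on $|V(H)|$, arguing by contraposition: assuming the sparse graph $G$ has \emph{no} anticomplete pair of size $\epsilon m$, I build an induced copy of $H$. The absence of a linear anticomplete pair is an expansion property, namely that every two disjoint linear sets have an edge across them, while the bound $<\gamma m$ makes every neighborhood a negligible fraction of $V(G)$, so forbidding adjacency to any bounded set of already-used vertices discards only a negligible fraction of candidates. Since a forest is $1$-degenerate, I order $V(H)=v_1,\dots,v_k$ so that each $v_i$ has at most one earlier neighbor $v_{p(i)}$, and embed $v_i\mapsto x_i$ greedily while maintaining linear-size \emph{candidate reservoirs} for the still-unembedded branches; expansion locates a vertex adjacent to $x_{p(i)}$ inside the appropriate reservoir, and sparsity preserves non-adjacency to the other $x_j$. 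The main obstacle is exactly this reservoir management: a naive greedy step wants the image of a leaf to lie in the neighborhood of its parent's image, but that neighborhood has size $<\gamma m$ and cannot host a whole subtree. I expect to resolve this by carrying reservoirs for entire subtrees rather than single vertices, splitting a reservoir at each branching vertex of $H$ and invoking expansion to attach the next vertex while keeping every child-reservoir of linear size; this is precisely where the hypothesis that $H$ is a forest, so that its branches are independent and $1$-degeneracy applies, is essential. Finally I would assemble the constants: fix $H$, let the core claim supply $\gamma,\epsilon$, choose $\tau$ so small that the pruned graph has maximum degree below the $\gamma$ threshold, and trace $\delta,\tau,\epsilon$ back through the two reductions to obtain the single constant $\epsilon(H)$ required by the statement.
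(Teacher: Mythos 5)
The paper does not prove this statement: it is imported verbatim as a Fact from \cite{chudnovsky2020pure}, so the only meaningful comparison is with the proof in that source. Your two outer reductions are correct and do match how Chudnovsky, Scott, Seymour and Spirkl proceed: by R\"odl's theorem one passes to a linear-size $Z$ on which $G$ is $\tau$-sparse or $\tau$-dense, the dense case is handled by complementation (using that $G$ being $\overline{H}$-free makes $\overline{G}[Z]$ $H$-free), and deleting high-degree vertices reduces everything to the sparse core: every $H$-free graph on $m$ vertices with maximum degree $<\gamma m$ has a linear anticomplete pair. But be aware that this core claim is not a ``genuinely combinatorial'' afterthought; it is the main theorem of \cite{chudnovsky2020pure} (resolving a conjecture of Liebenau and Pilipczuk), and essentially all the work of that paper lives there.

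Your sketch of the core claim has a genuine gap, and it sits exactly at the step you flag. The hypothesis ``no anticomplete pair of size $\epsilon m$'' is an expansion property between two sets \emph{each of linear size}; it says nothing about the neighborhood of a single already-embedded vertex, which by assumption has size $<\gamma m$, a negligible fraction. What you can legitimately extract is this: for any fixed reservoir $R$ with $|R|\geq\epsilon m$, fewer than $\epsilon m$ vertices have no neighbor in $R$ (else those non-neighbors together with $R$ form an anticomplete pair). So when you embed $v_i$, you may choose $x_{p(i)}$ from a linear candidate set so that it has \emph{at least one} neighbor in each child reservoir --- but only one, with no slack: that forced neighbor was not chosen generically, may be adjacent to previously embedded vertices, and carries no guarantee of having any further neighbor in the reservoirs for the next level. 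Your proposed fix --- carrying reservoirs for whole subtrees and splitting at branching vertices --- addresses branching but not depth: the scheme already breaks on an induced path with three edges, where $x_2$ is forced into the sublinear set $N(x_1)$ and expansion (which needs linear sets on both sides) can never again be applied to attach $x_3$. Even strengthening the expansion does not rescue the greedy argument: for instance, if there is no anticomplete pair of size $\epsilon m$, then between any two disjoint sets of size $2\epsilon m$ there is a matching of size $\geq\epsilon m$ (a K\H{o}nig cover of smaller size could be deleted to create an anticomplete pair), yet the depth obstruction persists because each matched partner is still a single forced vertex. Making precise that \emph{linearly many} vertices admit deep induced extensions into prescribed reservoirs is precisely the hard content, and the argument of \cite{chudnovsky2020pure} is not a greedy embedding at all: it is an induction on the forest with a substantially strengthened hypothesis, manipulating structured families of disjoint linear sets rather than a single partial copy of $H$. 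As written, your proposal proves the two reductions and then assumes the theorem.
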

   The family of forests can be shown to have VC-minimal complexity $\leq 2$ and
   thus the VC-minimal case is not covered in \cite{chudnovsky2020pure}.
   \\
   \indent
   For a forest $H$, $v\in V(H)$,
   let $B_{v,\triangleleft}$ denote the set of the predecessor of $v$ and
   $B_{v,\triangleright}$ 
   denote the set of successors of $v$.
   Consider the family 
   $\mathcal{F}_{H}:=$
   $\{B_{v,\triangleleft},
   B_{v,\triangleright}:
   v\in H\}$.
   $\mathcal{F}_H$ is directed:
   Let $v, w\in V(H)$.
If $B_{v,\triangleleft}\cap
B_{w,\triangleright}\neq \emptyset$, 
then since $B_{v,\triangleleft}$ is a singleton, 
$B_{v,\triangleleft}\subseteq
B_{w,\triangleright}$. 
Similarly, 
if $B_{v,\triangleleft}\cap
B_{w,\triangleleft}\neq \emptyset$,
then
$B_{v,\triangleleft}\subseteq
B_{w,\triangleleft}$. 
If $B_{v,\triangleright}\cap
B_{w,\triangleright}\neq \emptyset$, 
then $v=w$ and 
$B_{v,\triangleright}=
B_{w,\triangleright}$.
\\
\indent
For any $v\in V(H)$,
$E_v=B_{v,\triangleleft}
\sqcup
B_{v,\triangleright}$.
So given any forest $H=(V(H),E)$ and disjoint $X,Y\subseteq V(H)$,
the bipartite graph $(X,Y;E)$ has VC-minimal complexity $\leq 2$.
\end{remark}
\bibliographystyle{alpha}
\bibliography{references}

\end{document}